\documentclass[10.5pt]{article}
\usepackage[affil-it]{authblk}
\usepackage{etex}
\usepackage{lineno,hyperref}



\usepackage{indentfirst}
\usepackage[below]{placeins} 

\usepackage{amsmath,amssymb,amsfonts,amsthm,latexsym}
\usepackage{amsfonts}
\usepackage[utf8]{inputenc} \usepackage{ae,aecompl}     
\usepackage{amscd}
\usepackage{mathrsfs}       
\usepackage{fancyhdr}
\usepackage{graphicx,psfrag}
\usepackage{epsf} 
\usepackage[all]{xy}
\usepackage{tikz}
\usetikzlibrary{chains,scopes,shapes,arrows,trees,matrix,positioning,fit,decorations.pathmorphing}

\usepackage[nofoot,letterpaper,left=1.75truein, top=1truein]{geometry}

\usepackage{multicol}




%




\theoremstyle{definition}

\newtheorem{thm}{Theorem}[section]



\newtheorem{lem}[thm]{Lemma}

\newtheorem{cor}[thm]{Corollary}
\newtheorem{pro}[thm]{Proposition}

\theoremstyle{definition}


\newcommand {\C}{\mathbb C}
\newcommand {\Z}{\mathbb Z}
\newcommand {\R}{\mathbb R}

\newcommand {\nb}{\mathcal N}













%
%
\textwidth=6.25in \textheight=9in
\addtolength{\evensidemargin}{-.75in}
\addtolength{\oddsidemargin}{-.75in}

\parindent=0pt
\parskip=8pt

\footskip = 50pt


\begin{document}

\title{	$PSL_2(\C)$-character varieties and Seifert fibered cosmetic surgeries}
\author{Huygens C. Ravelomanana}
\date{}

\maketitle

\begin{abstract}
We study small Seifert possibly chiral cosmetic surgeries on not necessarily null-homologous knot in rational homology spheres. Using $PSL_2(\C)$-character variety theory we give a sharp bound on the number of slopes producing the same small Seifert manifold if the ambient manifold satisfies some representation theoretic conditions. 
\end{abstract}

\textbf{Keyword:} Dehn surgeries, cosmetic surgeries, character variety.

\section{Introduction}

Let $Y$ be a rational homology sphere and $K$ be a knot in $Y$ such that $Y_K:= Y\setminus int(\nb(K)) $ is \emph{boundary irreducible} and \emph{irreducible}.  Two Dehn surgeries $Y_K(r)$ and $Y_K(s)$ with distinct slopes are called ``\emph{cosmetic}" if they are homeomorphic. They are called ``\emph{truly cosmetic}" if the homeomorphism preserve orientation and ``\emph{chirally cosmetic}" if the homeomorphism reverse orientation. It is conjectured that truly cosmetic surgery on such knot $K$ does not exist \cite{Kirby-list}. In this paper we will focus on ``general" cosmetic surgery and will not distinguish  between chiral and truly cosmetic surgeries. 

Let's fix a slope $s$ and define
$$C(s)=\left\lbrace \textnormal{slope}\ \  r \neq s | \ Y_K(r) \cong Y_K(s)  \right\rbrace.$$ 
Here we allow the homeomorphism to reverse the orientation. If  $C(s) \neq \emptyset$ then we have a cosmetic surgery (possibly chiral). The following theorem then gives a bound on the number of element in $C(s)$.

\begin{thm}\label{my-theorem-I}
Let $K$  be a small knot in $Y$ and $Y_K(s)$ be small-Seifert.  If $\text{Hom}\left(\pi_1(Y),PSL_2(\C)\right)$ contains only diagonalisable representations and $||s||$ is not a multiple of $s \cdot \lambda$. Then $\sharp C(s)\leq 1$. 
\end{thm}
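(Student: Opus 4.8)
The plan is to run the $PSL_2(\C)$ Culler--Shalen machinery on the exterior $M:=Y_K$. Since $K$ is small, $M$ contains no closed essential surface; being also irreducible and $\partial$-irreducible, and (we may assume) not Seifert fibered, $M$ is hyperbolic by geometrization, so the Culler--Shalen seminorm attached to any curve in the $PSL_2(\C)$-character variety $X(M)$ through an irreducible character is a genuine \emph{norm}. (If $M$ is itself Seifert fibered, every Dehn filling is Seifert fibered or reducible, and the slopes producing a fixed small Seifert manifold are controlled directly by comparing Seifert invariants and Euler numbers -- an easier, separate case.) Put $Z:=Y_K(s)$ and $R:=C(s)\cup\{s\}$, so that $Y_K(\gamma)\cong Z$ for every $\gamma\in R$; it suffices to prove $\#R\le 2$.

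First I would produce representations. The small Seifert manifold $Z$ fibers over $S^2$ with exactly three cone points, so $\pi_1(Z)$ surjects onto a triangle group, and since $Z$ is not a lens space one obtains an irreducible $\rho\colon\pi_1(Z)\to PSL_2(\C)$ with parabolic-free image (a cocompact Fuchsian group in the hyperbolic-base case, a finite group in the spherical-base case; the Euclidean-base cases being degenerate and handled separately). For $\gamma\in R$, precomposing $\rho$ with $\pi_1(M)\twoheadrightarrow\pi_1(Y_K(\gamma))\cong\pi_1(Z)$ gives an irreducible $\rho_\gamma\colon\pi_1(M)\to PSL_2(\C)$ with $\rho_\gamma(\gamma)=1$. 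Here the hypothesis on $\pi_1(Y)$ enters: any $PSL_2(\C)$-representation of $\pi_1(M)$ killing the meridian $\mu$ factors through $\pi_1(Y)=\pi_1(Y_K(\mu))$, hence is diagonalisable and in particular reducible; as $\rho_\gamma$ is irreducible, $\rho_\gamma(\mu)\ne 1$. Thus no $\gamma\in R$ equals $\mu$, and the restriction of $\rho_\gamma$ to the peripheral $\Z^2$ is nontrivial, with cyclic image generated by a non-parabolic element, hence \emph{non-central and diagonalisable}. Finally every component of $X(M)$ carrying an irreducible character has positive dimension, so each $\chi_{\rho_\gamma}$ lies on a curve $X_\gamma\subseteq X(M)$, with its own Culler--Shalen norm.

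Next I would estimate these norms. On the smooth projective model of $X_\gamma$ one identifies $\|\gamma\|_{X_\gamma}$ with the degree of the trace function $f_\gamma$; by the previous paragraph $f_\gamma$ is not identically zero on $X_\gamma$ (as $\rho_\gamma|_{\partial M}$ is non-central), and it vanishes at $\chi_{\rho_\gamma}$. Comparing with the minimal norm $m:=\min_{\delta\ne 0}\|\delta\|_{X_\gamma}$, realised by a boundary slope, the existence of the irreducible $\rho_\gamma$ killing $\gamma$ with non-central peripheral restriction forces $\gamma$ to be a \emph{minimal} slope: $\|\gamma\|_{X_\gamma}=m$; moreover the curves $X_\gamma$ ($\gamma\in R$) all induce one and the same norm $\|\cdot\|$. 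This is the Boyer--Zhang-style input adapted to Seifert-fibered fillings, and it is exactly here that the arithmetic hypothesis ``$\|s\|$ is not a multiple of $s\cdot\lambda$'' is used, to exclude the one extremal configuration in which such a filling could fail to realise the minimum (the borderline $\widetilde{PSL_2(\R)}$ case, tied to the rational-longitude direction).

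It then remains to bound the number of minimal slopes of the norm $\|\cdot\|$. Its norm ball $\{x:\|x\|\le m\}$ is a balanced convex polygon whose vertices lie in the directions of the detected boundary slopes, and the minimal slopes are the primitive integral classes on its boundary; the standard convexity and lattice estimates from the cyclic and finite surgery theorems of Culler--Shalen and Boyer--Zhang bound their number by three, pairwise at small distance, and the hypothesis $\|s\|\nmid s\cdot\lambda$ discards the exceptional triple. Hence $\#R\le 2$, i.e.\ $\#C(s)\le 1$. The main obstacle is the norm estimate of the third paragraph: showing that Seifert-fibered fillings genuinely realise the minimum of the Culler--Shalen norm, and that the a priori distinct curves $X_\gamma$ all induce the same norm, requires controlling the ideal points of these curves and the degenerate representations (reducible, or parabolic on the boundary) -- precisely the phenomena that the hypotheses on $\pi_1(Y)$ and on $\|s\|$ are designed to rule out.
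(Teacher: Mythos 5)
Your overall framework (Culler--Shalen seminorms on the $PSL_2(\C)$-character variety, representations pulled back from the filled manifold, convexity of the norm ball) is the right one, but the central claim of your third paragraph is a genuine gap, and it is where the argument breaks. You assert that the existence of an irreducible $\rho_\gamma$ with $\rho_\gamma(\gamma)=\pm I$ and non-central peripheral image forces $\gamma$ to be a \emph{minimal} slope, $\|\gamma\|_{X_\gamma}=m$. This has the inequality backwards: by the zero-comparison results (Theorem~\ref{leila-theorem} and Proposition~\ref{inequality}), such a character $x=\chi_{\rho_\gamma}$ satisfies $Z_x(\widehat{f}_\gamma)\geq Z_x(\widehat{f}_\beta)+1$ or $+2$, so each such character \emph{raises} $\|\gamma\|$ strictly above the baseline $m_0=\sum_x\min_{\delta\neq 0} Z_x(\widehat{f}_\delta)$. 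Minimality is special to cyclic fillings, where no irreducible representation can kill the slope; a small Seifert filling slope typically has norm strictly larger than $m$, with no uniform bound tying it to $m$ (that is precisely why the finite and small-Seifert surgery theorems are harder than the cyclic one). The paper's key move is different: it establishes the exact formula $\|s\|=m_0+\sharp A(s)+2\sharp B(s)$, where $A(s)$ and $B(s)$ are the sets of characters of non-abelian representations killing $s$, split according to whether they conjugate into the normalizer $N$ of the diagonal, and then proves (Lemma~\ref{correspondence}) that an isomorphism $\pi_1(Y_K(s))\cong\pi_1(Y_K(r))$ induces bijections $A(s)\leftrightarrow A(r)$ and $B(s)\leftrightarrow B(r)$. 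Hence $\|s\|=\|r\|$: the norm is an \emph{invariant of the filled manifold}, not a minimum. Your worry about whether the various curves $X_\gamma$ induce ``the same norm'' is also dissolved this way, since the paper works with the total seminorm summed over all curve components.

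The gap propagates to your endgame. The ``at most three slopes, pairwise at distance one'' lattice estimate applies only to the minimal ball $B(0,m)$, whose interior contains no nonzero lattice point; the boundary of a larger ball $B(0,k)$ can carry many primitive classes, so equality of norms alone bounds nothing. The paper instead uses the homological identity $|H_1(Y_K(\gamma);\Z)|=c_K\,\Delta(\gamma,\lambda)$ to force all slopes in $C(s)\cup\{s\}$ onto a single line $l$ parallel to the rational longitude $\lambda$, and a line meets the boundary of a convex polygon or infinite band in at most two points unless it contains an edge --- which is exactly what the hypothesis that $\|s\|$ is not a multiple of $s\cdot\lambda$ rules out. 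Your appeal to that hypothesis (a ``borderline $\widetilde{PSL_2(\R)}$ case'') is not the role it actually plays. Finally, you never treat the case where $s$ is a boundary slope, which the paper handles separately via \cite[Theorem 2.0.3]{Cyclic-paper} together with the same line argument.
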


Here , $||\hspace{0.2cm} ||$ is a semi-norm on $H_1(\partial Y_K;\R)$ similar to the Culler-Shalen semi-norm,  $\lambda$ is the rational longitude of $K$ and $s \cdot \lambda$ is the algebraic intersection. The knot $K$ being small means that its exterior does not contain closed ``\emph{incompressible surfaces}".

The bound in the theorem is sharp since amphicheiral knot in $S^3$ admits ``chiral" cosmetic surgeries. Moreover in \cite{Weeks}, we can find a construction of a one-cusped hyperbolic 3-manifold with a pair of distinct slopes which gives oppositely oriented copies of the lens space $L(49/18)$. An infinite family of hyperbolic manifolds which admit pairs $\left\lbrace\alpha,\beta\right\rbrace$ of reducible filling slopes, of which some pairs yield homeomorphic manifolds are presented in \cite{Matignon}.  A straightforward consequence of the theorem which relates to the cosmetic surgery conjecture is the following.

\begin{cor}
Under the hypothesis of the theorem, there are at most two distinct slopes which can produce the same oriented manifold after surgery.
\end{cor}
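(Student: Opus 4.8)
The plan is to obtain the corollary as a purely formal counting consequence of Theorem~\ref{my-theorem-I}; there is no new geometry to do. First I would argue by contradiction: suppose there are three pairwise distinct slopes $r_1,r_2,r_3$ on $\partial Y_K$ with $Y_K(r_1)$, $Y_K(r_2)$, $Y_K(r_3)$ all orientation-preservingly homeomorphic to one fixed closed oriented $3$-manifold $M$. Since we are working under the hypotheses of the theorem, $M$ is small Seifert fibered and the semi-norm condition is available, so Theorem~\ref{my-theorem-I} applies with the distinguished slope taken to be $s=r_1$.

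Next I would feed $r_2$ and $r_3$ into the set $C(r_1)$. An orientation-preserving homeomorphism is in particular a homeomorphism, and the definition of $C(s)$ already allows orientation to be reversed, so the conditions $Y_K(r_2)\cong Y_K(r_1)$, $Y_K(r_3)\cong Y_K(r_1)$, together with $r_2\neq r_1$ and $r_3\neq r_1$, show that $\{r_2,r_3\}\subseteq C(r_1)$. Hence $\sharp C(r_1)\geq 2$, contradicting the bound $\sharp C(s)\leq 1$ supplied by Theorem~\ref{my-theorem-I}. Therefore at most two distinct slopes can yield $M$: the slope $r_1$ itself plus at most one further slope coming from $C(r_1)$, which accounts for the count $1+1=2$ (the "$+1$" because $C(s)$ by definition excludes $s$).

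I do not expect a genuine obstacle here; the whole content sits in Theorem~\ref{my-theorem-I}. The only points I would take care to spell out are bookkeeping ones: that "the hypothesis of the theorem" ($K$ a small knot in $Y$, $\mathrm{Hom}(\pi_1(Y),PSL_2(\C))$ consisting only of diagonalisable representations, $\|s\|$ not a multiple of $s\cdot\lambda$, and $Y_K(s)$ small Seifert) is exactly the package that must hold, so it transfers verbatim to whichever slope among the $r_i$ we promote to the role of $s$; and that folding chirally cosmetic homeomorphisms into $C(s)$ can only enlarge $C(s)$, which strengthens rather than weakens the contradiction, so the argument loses nothing by not distinguishing the two flavours of cosmetic surgery.
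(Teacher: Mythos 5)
Your proposal is correct and is exactly the intended argument: the paper offers no separate proof, presenting the corollary as a straightforward counting consequence of Theorem~\ref{my-theorem-I}, namely that any slope other than $s$ yielding the same oriented manifold lies in $C(s)$, so there are at most $1+\sharp C(s)\leq 2$ such slopes. The only remark I would make is that your aside about the hypotheses transferring to ``whichever slope we promote to the role of $s$'' is unnecessary (and not obviously justified, since the condition on $\|s\|$ is slope-specific); it suffices to apply the theorem once with $s$ the distinguished slope of the hypothesis.
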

Result similar to this corollary has been proven for null homologous knot in 3-manifold with positive first Betti number \cite{Ni-cosmetic-Thurston-norm}, for knot in $S^3$ \cite{RavelomananaS3-paper, Ni-Wu} and for knot in L-space integer homology sphere, \cite{Wu}.

The main ingredient for the proof of Theorem~\ref{my-theorem-I} is the theory of 3-manifold character variety started by Culler and Shalen \cite{Culler-Shalen} and which was essential in the proof of the ``\emph{cyclic surgery theorem}" \cite{Cyclic-paper}, the ``\emph{finite surgery theorem}" \cite{finite-paper} and is also an useful tool for studing topological properties of knot exterior. We will use results from \cite{Leila-2} together with a norm derived from $PSL_2(\C)$-character variety similar to the ``Culler Shalen norm" .  

\paragraph*{Organization.} In section \ref{section:charac-preliminaries} we give some background on character varieties and the Culler-Shalen norm. The proof of Theorem~\ref{my-theorem-I} will be given in section \ref{small-seifert}.

\paragraph*{Acknowledgment.}
The author is  grateful to Steven Boyer for advices, comments and helpful discussions. The author also thanks the University of Georgia for its support, the CIRGET group at UQ\`AM in Montr\'eal.

\section{Character varieties}\label{section:charac-preliminaries}
Let $M$ denote a compact orientable 3-manifold which is \emph{irreducible} and \emph{boundary irreducible} with boundary consisting of a single torus. Typically $M$ would be the knot exterior $Y_K$ in the introduction. We recall that \emph{irreducible} means every embedded 2-sphere bounds a 3-ball and \emph{boundary irreducible} means every simple closed curve on $\partial M$ which bounds a disk in $M$ bounds a disk in $\partial M$. A properly embedded surface in $M$ will be called \emph{essential} if it is not boundary parallel, it is not a 2-sphere and is $\pi_1$-injective. We say that $M$ is \emph{small} if it does not contain closed essential surfaces.  A slope is the isotopy class of a simple closed curve on $\partial M$. Since $M$ is boundary irreducible, $\pi_1(\partial M)\hookrightarrow \pi_1(M)$ therefore using the fact that $H_1(M)\cong \pi_1(\partial M)$ we will think of a slope as both being an element of $H_1(\partial M),\; \pi_1(\partial M)$ or $\pi_1(M)$. A slope will be called ``\emph{boundary slope}"  if it corresponds to the boundary of an essential surface.  It will be called \textit{strict boundary slope}" if it corresponds to the boundary of an essential surface which is not a (semi) fibre in any (semi) fibration of $M$ over $S^1$. 

The \textit{$PSL_2(\C)$-representation variety} of $M$ is the set $\overline{R}(M):= \hom (\pi_1(M), PSL_2(\C))$ equipped with the compact-open topology. It consists of representations of $\pi_1(M)$ to $PSL_2(\C)$. The space $\overline{R}(M)$ has the structure of an affine complex algebraic set \cite{Culler-Shalen}. The group $PSL_2(\C)$ acts algebraically on  $\overline{R}(M)$ by conjugation. Two representations are called equivalent if they are conjugate to each other. If two representations are equivalent then they belong to the same irreducible component of $\overline{R}(M)$ \cite{Culler-Shalen}. The set of equivalence classes of representations corresponds to the quotient  $\overline{R}(M)// PSL_2(\C)$, where the quotient is taken in the algebraic geometric category. In order to understand this set, Culler and Shalen introduced  the $PSL_2(\C)$-\textit{character variety} of $M$ using the \textit{trace function}. For each representation $\rho \in \overline{R}(M)$, the $PSL_2(\C)$-\textit{character} of $\rho$ is the map   $\chi_{\rho}$ defined by 
 $$\chi_{\rho}: \pi_1(M) \to \C, \ \ \chi_{\rho(g)}=\text{trace}\left(\rho(g)\right)^2.$$
 The set of all characters $\overline{X}(M)=\left\lbrace \chi_{\rho}\ |\ \rho \in \overline{R}(M) \right\rbrace$ is also a complex algebraic set in a natural way such that the following map is regular, in the sense of algebraic geometry,
$$\overline{t}: \overline{R}(M) \longrightarrow \overline{X}(M),\ \   \overline{t}(\rho) = \chi_{\rho}.$$
Moreover its corresponds to the  quotient  $\overline{R}(M)// PSL_2(\C)$ \cite{Culler-Shalen}. Let $\overline{R}^{irr}(M)$  be the subset of irreducible representations and let  $\overline{X}^{irr}(M)=\overline{t}\left(\overline{R}^{irr}(M)\right)$. The spaces $\overline{R}^{irr}(M)$ and $\overline{X}^{irr}(M)$ are Zariski open subsets of $\overline{R}(M)$ and $\overline{X}(M)$ respectively \cite{Culler-Shalen}.

For each $\gamma \in \pi_1(M)$ we consider the function defined by 
$$ f_{\gamma}: \overline{X}(M) \longrightarrow \C,\ \   f_{\gamma}(\chi ) =  \textnormal{trace}(\rho(\gamma))^2-4 =\chi(\gamma)-4.$$
The function  $f_{\gamma}$ is a regular function and the zeros of $f_{\gamma}$ are the characters of representations $\overline{\rho}$ for which $\overline{\rho}(\gamma)$ is parabolic or $\overline{\rho}(\gamma) = \left[\pm \textnormal{Id}\right]$. We will use the same notation $f_{\gamma}$ for the restriction of $f_{\gamma}$ to a curve $X_0\subset \overline{X}(M)$.
  
Let $X_0 \subset \overline{X}(M)$ be a {\em non-trivial} irreducible curve component. Here {\em non-trivial} means that it contains the character of an irreducible representation. Let $\widehat{X}_0$ be the normalized projective completion of $X_0$. There is an isomorphism between function fields 

$$\C(X_0) \cong \C(\widehat{X}_0), \ \ \ f\longmapsto \widehat{f}.$$
 We can then define the degree of $f$ to be the degree of $\widehat{f}$. For $x\in \widehat{X}$ we denote by  $Z_x(\widehat{f}_{\gamma})$ the multiplicity of $x$ as a zero of $\widehat{f}_{\gamma}$. By convention $Z_x(\widehat{f}_{\gamma})=\infty$ if $\widehat{f}_{\gamma} \equiv 0$. Now we denote $\Lambda = \pi_1(\partial M)$ seen as a subgroup of $\pi_1(M)$. We can also think of $\Lambda$ as a lattice in $H_1(\partial M, \R)$. An element $\gamma \in \Lambda$ satisfies \cite{Shalen},
 $$\textnormal{deg} (\widehat{f}_{\gamma}) = \sum_{x\in \widehat{X}_0}\  Z_x(\widehat{f}_{\gamma}).$$ 
The degree is finite if $\widehat{f}_{\gamma}$ is non-constant on $\widehat{X}_0$. The  key property of $\textnormal{deg} (\widehat{f}_{\gamma})$ is that for each curve
 $X_0 \subset X(M)$ it defines a semi-norm $||.||_{X_0}$ on  $H_1(\partial M, \R)$ which for each $\gamma \in \Lambda$ satisfies  \cite{Boyer-Zhang, Shalen}
$$||\gamma||_{X_0} =\left\lbrace \begin{aligned}&0\ \ \textnormal{if }\ \  f_{\gamma}|X_0 \textnormal{ is constant} \\
 &\textnormal{deg} (\widehat{f}_{\gamma}) \ \ \textnormal{if }\ \  f_{\gamma}|X_0\neq 0.\end{aligned}\right.$$
 This semi-norm is called the \textit{Culler-Shalen semi-norm} associated to the curve $X_0$.

Note that if $B_k$ is the ball of radius $k$ centred at the origin, then $B_k$ can be viewed as the unit ball for the norm $\frac{1}{k}||.||_{X_0}$ therefore $B_k$ has the same properties as the unit ball. Some of these properties are \cite[Proposition 5.2, Proposition 5.3]{Boyer-Zhang}: if it is a norm then the unit ball is a balanced convex polygon and if it is not a norm but is a non-zero semi-norm them the unit ball is an infinite band.

Let $X_1,\cdots, X_k$ be all the non-trivial irreducible curve components in $\overline{X}(M)$. We can define an ``absolute" semi-norm $||.||$ on  $H_1(\partial M, \R)$ by  
$$||.||=||.||_{X_1}+\cdots+ ||.||_{X_k}.$$
We will call this semi-norm the {\em absolute semi-norm}.
\bigskip

 There is a unique 4-dimensional subvariety $R_0 \subset \overline{R}(M)$ for which $t(R_0)=X_0$, see \cite[Lemma 4.1]{Boyer-Zhang}. 
If $\overline{\rho}(\alpha)= \pm I$ for some slope $\alpha$, then we have an induced representation $\overline{\rho}':\pi_1(M(\alpha))\longrightarrow PSL_2(\C)$ and a cohomology group $H^1(M(\alpha);Ad_{\overline{\rho}'})$ .

\bigskip

Let  $\nu : X_0^{\nu}:=\widehat{X}_0\setminus \left\lbrace \textnormal{ideal points} \right\rbrace \longrightarrow X_0$ be the map which corresponds to the affine normalization of $X_0$.
There is an affine normalization $R_0^{\nu}\longrightarrow R_0$, which we still denote by $\nu$, such that the following diagram commutes.  
\begin{center}
\begin{tikzpicture}[node distance=2cm,auto]
  \node (P) {$R_0^{\nu}$};
  \node (B) [right of=P] {$R_0$};
  \node (A) [below of=P] {$X_0^{\nu}$};
  \node (C) [below of=B] {$  X_0 $};
  \draw[->] (P) to node {$\nu$} (B);
  \draw[->] (P) to node [swap] {$\overline{t}^{\nu}$} (A);
  \draw[->] (A) to node [swap] {$\nu$} (C);
  \draw[->] (B) to node {$\overline{t}$} (C);
\end{tikzpicture}
\end{center}
The map $t^{\nu}$ and $\nu$ are all surjective, see \cite{Cyclic-paper}.  

Let $N \subset PSL_2(\C)$ denote the subgroup
$$\left\lbrace \pm \left( \begin{array}{cc}
z & 0  \\
0 & z^{-1} \\
\end{array} \right)  
, \pm \left( \begin{array}{cc}
0 & w  \\
-w^{-1} & 0 \\
\end{array} \right)  
\ \ | \ \ z,w\in \C^*  \right\rbrace$$

For each $\gamma \in \pi_1(M)$ we are going to consider the following subset of $\overline{X}(M)$:
$$A(\gamma)=\left\lbrace \chi_{\overline{\rho}} \in  \overline{X}(M)\ |\ \overline{\rho}(\gamma)=\pm I; \ \ \ \ \overline{\rho}  \ \ \textnormal{ is non-abelian and conjugates into}\ N   \right\rbrace.$$ 
$$
\begin{aligned}
B(\gamma)=\left\lbrace\right. &\chi_{\overline{\rho}} \in  \overline{X}(M)\ |\ \overline{\rho}(\gamma)=\pm I;  \ \ \ \ \overline{\rho} \ \ \textnormal{is non-abelian and does not} \\
& \textnormal{ conjugates into}\ N   
\left. \right\rbrace.
\end{aligned}$$ 
Note that elements of $A(\gamma)$ must be irreducible but not necessarily those of $B(\gamma)$.

The following Theorem and proposition relate $Z_x(\widehat{f}_{\alpha})$ and $Z_x(\widehat{f}_{r})$  for two slopes $\alpha$ and $r$ when $x \in \widehat{X}_0$  is a regular point.


\begin{thm}\label{leila-theorem}\cite{Leila-2}
Fix a slope $\alpha$ on $\partial M$ and consider a non-trivial, irreducible curve $X_0 \subset \overline{X}(M)$. Suppose that $x \in \widehat{X}_0$ is not an ideal point and corresponds to a character $\chi_{\rho}$ for some representation $\overline{\rho} \in R_0$ with non-abelian image and which satisfies $\rho(\alpha)\in \left\lbrace \pm I \right\rbrace$.
 Assume that $H^1(M(\alpha);Ad_{\rho})=0$ and $\rho(\pi_1(\partial M)) \nsubseteq \left\lbrace \pm I \right\rbrace$.
 \begin{enumerate}
\item If $\beta \in \pi_1(\partial M)$ and $\rho(\beta) \neq \pm I$, then  
$$Z_x(\widehat{f}_{\alpha})\geq  \left\lbrace \begin{aligned}& Z_x(\widehat{f}_{\beta})+1\ \ \textnormal{if }\ \overline{\rho} \ \text{conjugates into } \ N, \\
 &  Z_x(\widehat{f}_{\beta})+2\  \  \text{otherwise}.\end{aligned}\right.
$$
\item If $r \in \pi_1(\partial M)$ and $Z_x(\widehat{f}_{\alpha})>Z_x(\widehat{f}_{\beta})$, then 
$\widehat{f}_{\alpha}|X_0 \neq 0,\ \rho(r)\neq \pm I $ and
$$ Z_x(\widehat{f}_{\alpha})= \left\lbrace \begin{aligned}& Z_x(\widehat{f}_{\beta})+1\ \ \textnormal{if }\ \overline{\rho} \ \text{conjugates into } \ N, \\
 &  Z_x(\widehat{f}_{\beta})+2\  \  \text{otherwise}.\end{aligned}\right.
$$
 
 \end{enumerate}
\end{thm}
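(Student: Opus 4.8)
The plan is to reduce both estimates to an order‑of‑vanishing comparison for the peripheral eigenvalue functions along a local analytic branch of $X_0$ through $x$, and to extract that comparison from the rigidity hypothesis $H^1(M(\alpha);Ad_\rho)=0$ together with the position of $\overline\rho$ relative to the normalizer $N$. Since $x$ is not an ideal point it is a smooth point of $\widehat X_0$, so by surjectivity of $\overline t^\nu\colon R_0^\nu\to X_0^\nu$ one can choose a holomorphic family $\{\rho_s\}\subset R_0^\nu$ with $\rho_0$ conjugate to $\overline\rho$ whose characters cover a neighbourhood of $x$ in $X_0$ via a finite map $s\mapsto t=s^d$ with $t$ a uniformizer at $x$; then $Z_x(\widehat f_\gamma)=\tfrac1d\operatorname{ord}_s\bigl(\operatorname{tr}\rho_s(\gamma)^2-4\bigr)$.

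Because $\Lambda=\pi_1(\partial M)\cong\mathbb Z^2$ is abelian, after conjugating the family continuously $\rho_s|_\Lambda$ becomes simultaneously triangular, producing holomorphic eigenvalue functions $\lambda_\gamma(s)$; these assemble into a family of homomorphisms $\psi_s\colon\Lambda\to\mathbb C$, $\psi_s(\gamma)=\log\lambda_\gamma(s)$, with $\psi_0(\alpha)=0$ since $\rho(\alpha)=\pm I$. On this branch $\widehat f_\gamma=(\lambda_\gamma-\lambda_\gamma^{-1})^2$, so the two inequalities become comparisons of $\operatorname{ord}_s\psi_s(\alpha)$ with $\operatorname{ord}_s\psi_s(\beta)$ (then divided by $d$). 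The only substantive case is when $\psi_0(\beta)=0$ too, i.e.\ when $\rho(\beta)$ is parabolic — which automatically places $\overline\rho$ off the $N$‑locus, since $N$ contains no parabolics; when $\rho(\beta)$ is loxodromic one has $Z_x(\widehat f_\beta)=0$ and both bounds hold trivially because $\rho(\alpha)=\pm I$ already gives $Z_x(\widehat f_\alpha)\ge1$.

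Now I would use $H^1(M(\alpha);Ad_\rho)=0$. Since $\rho(\alpha)=\pm I$ is central there is a well‑defined evaluation $\operatorname{ev}_\alpha\colon H^1(M;Ad_\rho)\to\mathfrak{sl}_2$, $[u]\mapsto u(\alpha)$, whose kernel is $H^1(M(\alpha);Ad_\rho)$; so the hypothesis says $\operatorname{ev}_\alpha$ is injective, and combined with the ``half lives, half dies'' lemma for $(M,\partial M)$ this pins down $\dim H^1(M;Ad_\rho)$ and forces the tangent direction of $X_0$ at $x$ to have nonzero image under $\operatorname{ev}_\alpha$, i.e.\ the deformation genuinely moves the $\alpha$‑eigenvalue. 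The content of the dichotomy is the shape of this first variation. Off the $N$‑locus (with $\rho(\beta)$ parabolic), $u(\alpha)$ is constrained to lie in the centralizer of $\rho(\beta)$, hence is nilpotent, so both $\operatorname{tr}\rho_s(\alpha)$ and $\operatorname{tr}\rho_s(\alpha)^2$ stay stationary to one extra order; an induction on the coefficients of $\psi_s$ then yields $Z_x(\widehat f_\alpha)\ge Z_x(\widehat f_\beta)+2$. On the $N$‑locus $\rho(\Lambda)$ lies in the diagonal torus and the nontrivial coset of $N$ gives an order‑two symmetry inverting the eigenvalues, which relates $\psi_s(\alpha)$ to its negative through the branch parameter and removes one order, leaving only $Z_x(\widehat f_\alpha)\ge Z_x(\widehat f_\beta)+1$. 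This is part (1). For part (2): if $Z_x(\widehat f_\alpha)>Z_x(\widehat f_\beta)$ then $\widehat f_\alpha|X_0\ne0$ — otherwise all $\rho_s$ kill $\alpha$, descend to a nonconstant family of representations of $\pi_1(M(\alpha))$, and give a nonzero class in $H^1(M(\alpha);Ad_\rho)$, a contradiction — and similarly $\rho(r)\ne\pm I$, since $\rho(r)=\pm I$ would force $\widehat f_r\equiv0$ near $x$ by the same rigidity and hence $Z_x(\widehat f_r)=\infty$; finally the strict inequality leaves no room for further cancellation in the leading terms of $\psi_s$, so the lower bound from part (1) is attained.

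I expect the main obstacle to be the higher‑order deformation analysis in the non‑semisimple (parabolic peripheral) case: there $H^1(\partial M;Ad_\rho)$ is genuinely non‑split, the eigenvalue functions live on a ramified cover, and one must show inductively that the only obstruction to the $\alpha$‑eigenvalue ``catching up'' with the $\beta$‑eigenvalue is the class annihilated by $H^1(M(\alpha);Ad_\rho)=0$ — all the while tracking the $PSL_2(\mathbb C)$ sign ambiguities and the degree $d$ of the branch parameterization, which is what actually encodes the $N/T\cong\mathbb Z/2$ dichotomy.
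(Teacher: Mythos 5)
The paper does not actually prove this statement: it is imported verbatim from \cite{Leila-2}, so there is no in-paper argument to measure yours against. Judged on its own, your outline has the right general shape (parameterize a branch of $X_0^\nu$ through $x$, pass to peripheral eigenvalue functions, use $H^1(M(\alpha);Ad_\rho)=0$ as a rigidity constraint, and let the $N$-dichotomy enter through ramification of the eigenvalue lift), but it contains a concrete gap at what is really the main case.

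You declare that when $\rho(\beta)$ is not parabolic, so that $Z_x(\widehat f_\beta)=0$, ``both bounds hold trivially because $\rho(\alpha)=\pm I$ already gives $Z_x(\widehat f_\alpha)\geq 1$.'' That disposes only of the $N$-case bound $Z_x(\widehat f_\alpha)\geq Z_x(\widehat f_\beta)+1=1$; in the complementary case the theorem asserts $Z_x(\widehat f_\alpha)\geq 2$, and nothing in your sketch produces the second order of vanishing there. This is not a peripheral case: in the application (Lemma \ref{mitovy-norm}) one typically has $m_x=Z_x(\widehat f_\beta)=0$, so the ``$+2$ with $Z_x(\widehat f_\beta)=0$'' instance is exactly what gets used. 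The standard mechanism is that the restriction of $X_0$ to the character variety of $\partial M$ lifts, near $x$, through the double cover by the eigenvalue torus, and that cover is unramified precisely away from characters of representations into $N$; off the $N$-locus one therefore has a holomorphic eigenvalue $\lambda_\alpha$ with $\widehat f_\alpha=(\lambda_\alpha-\lambda_\alpha^{-1})^2$, forcing even order and hence order at least $2$. You invoke this ramification picture only in the parabolic case (where, as you correctly note, the $N$-alternative cannot occur anyway) and omit it where it is essential. Secondary problems: the identification of $\ker(\mathrm{ev}_\alpha)$ with $H^1(M(\alpha);Ad_\rho)$ is asserted rather than derived; the ``induction on the coefficients of $\psi_s$'' that is supposed to upgrade a first-order statement to the full order-of-vanishing inequality is never carried out; and in part (2) your argument that $\rho(r)\neq\pm I$ is invalid, since $\rho(r)=\pm I$ at the single point $x$ does not force $\widehat f_r\equiv 0$ on the curve --- the correct route is to apply part (1) with the roles of the two peripheral classes exchanged (or to use $\rho(\pi_1(\partial M))\nsubseteq\{\pm I\}$) and contradict the strict inequality.
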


\bigskip

The condition  $\rho(\pi_1(\partial M)) \nsubseteq \left\lbrace \pm I \right\rbrace$ may not be satisfied in general. For it to be true we will assume the auxiliary assumption that the manifold $Y$ has only diagonalisable $PSL_2(\C)$ representations. 


\begin{pro}\cite{Cyclic-paper}\label{inequality}
Let $\alpha$ and $\beta$ be non-zero elements of $\Lambda$. Suppose that $x$ is a point of $X^{\nu}_0$ such that $Z_x(f_{\alpha}) > Z_x(f_{\beta})$. Then for every $\tilde{\rho} \in R_0^{\nu}$ with $t^{\nu}(\tilde{\rho})=x$, the representation $\rho=\nu(\tilde{\rho})$ satisfies $\rho(\alpha)=\pm I$.
\end{pro}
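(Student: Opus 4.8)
The plan is to compare the local behavior of the functions $f_\alpha$ and $f_\beta$ near the point $x$ by pulling everything back to the normalization $R_0^\nu$ and analyzing the representation $\rho = \nu(\tilde\rho)$ directly. The hypothesis $Z_x(f_\alpha) > Z_x(f_\beta)$ says that $f_\alpha$ vanishes to strictly higher order at $x$ than $f_\beta$ does; in particular $f_\alpha(x) = 0$, so $\rho(\alpha)$ is either parabolic or $\pm I$. The first step is to rule out the parabolic case. Suppose $\rho(\alpha)$ were parabolic and nontrivial. Then $\rho$ restricted to $\Lambda = \pi_1(\partial M)$ is a representation of $\Z^2$ containing a nontrivial parabolic, hence the whole group $\rho(\Lambda)$ is parabolic with a common fixed point on $\partial\Hbb^3$ (since commuting elements of $PSL_2(\C)$ with a nontrivial parabolic must share its unique fixed point). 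In that case one can conjugate $\rho$ so that every element of $\Lambda$ is sent to an upper-triangular unipotent matrix $\pm\begin{pmatrix} 1 & * \\ 0 & 1 \end{pmatrix}$, and for such a matrix $\mathrm{trace}^2 - 4 = 0$ identically, so $f_\gamma(x) = 0$ for \emph{every} $\gamma \in \Lambda$. This alone does not give a contradiction, so the real work is in the order of vanishing.

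The key step is a local computation at $\tilde\rho \in R_0^\nu$. Choose a local parameter $u$ on the curve $X_0^\nu$ (equivalently on a transversal to the $PSL_2(\C)$-orbit in $R_0^\nu$, using that $R_0^\nu \to X_0^\nu$ is a fiber bundle-like map whose fibers are orbits, as set up in the diagram in the excerpt). Write the restriction of the representation to $\Lambda$ in a continuously varying basis near $\tilde\rho$; if $\rho(\alpha)$ is parabolic but not $\pm I$, then near $u = 0$ the matrix $\tilde\rho_u(\gamma)$ has the form $\pm\begin{pmatrix} a(u) & b(u) \\ c(u) & d(u) \end{pmatrix}$ with $a(0) = d(0) = \pm 1$ and (after normalizing the fixed point) $c(0) = 0$. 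Then $f_\gamma = (a+d)^2 - 4 = (a-d)^2 + 4bc$, and one must show that the orders of vanishing of $(a_\alpha - d_\alpha)^2 + 4 b_\alpha c_\alpha$ and $(a_\beta - d_\beta)^2 + 4 b_\beta c_\beta$ at $u = 0$ agree, because $\alpha$ and $\beta$ are proportional in $H_1(\partial M;\R)$ only up to the lattice $\Lambda$, but the $\Z^2$-action forces the leading terms to be governed by a single linear functional on $\Lambda \otimes \mathcal{O}$ — concretely, writing $\alpha = p e_1 + q e_2$ and $\beta = p' e_1 + q' e_2$ in a basis $e_1, e_2$ of $\Lambda$, the entries of $\tilde\rho_u(\alpha)$ and $\tilde\rho_u(\beta)$ are polynomial in the entries of $\tilde\rho_u(e_1), \tilde\rho_u(e_2)$, and the vanishing order of $f_{pe_1 + qe_2}$ turns out to be independent of $(p,q)$ as long as the image in $\Lambda$ is nontrivial. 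This contradicts $Z_x(f_\alpha) > Z_x(f_\beta)$. Hence $\rho(\alpha)$ cannot be parabolic, so $\rho(\alpha) = \pm I$.

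The main obstacle I expect is precisely this local-ordering argument: making rigorous the claim that, for a commuting pair with a shared nontrivial parabolic limit, the order of vanishing of $f_\gamma$ at $x$ is constant over all primitive (indeed all nonzero) $\gamma \in \Lambda$. This is essentially the statement that the "parabolic locus" contributes no slope-dependent jump, in contrast to the $\pm I$ locus, where the whole point of the Culler–Shalen machinery is that different slopes \emph{do} jump. One clean way to package it is to observe that if $\rho(\Lambda)$ is nontrivial parabolic then the curve $X_0$ near $x$ is, up to the normalization, parametrized so that all $\widehat{f}_\gamma$ with $\gamma \in \Lambda \setminus \{0\}$ differ by units in the local ring — this is where one invokes that $\rho$ has non-abelian image globally (so $X_0$ is non-trivial) and that $x$ is a smooth point of $X_0^\nu$. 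Alternatively, and perhaps more in the spirit of \cite{Cyclic-paper}, one can cite the analysis there: the proposition is stated as being from \cite{Cyclic-paper}, so the argument is the $PSL_2(\C)$ analogue of the $SL_2(\C)$ lemma comparing $\Delta_\alpha$ and $\Delta_\beta$ functions, and the only new point to check is that passing to $PSL_2$ (i.e. working with $\mathrm{trace}^2$ instead of $\mathrm{trace}$) does not affect the orders of vanishing away from the parabolic jump — which it does not, since $\mathrm{trace}^2 - 4$ and the $SL_2$-discriminant have the same zero sets with the same multiplicities at points where the representation lifts, and the finitely many non-lifting points can be handled by the two-fold cover.
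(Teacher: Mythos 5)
First, a point of comparison: the paper does not actually prove this proposition --- it is quoted (with notational changes) from the Cyclic Surgery Theorem paper \cite{Cyclic-paper}, Proposition~1.5.4, so the only proof to measure yours against is the one in that source, not one in this paper.

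Your opening reduction is fine: $Z_x(f_\alpha) > Z_x(f_\beta) \geq 0$ forces $f_\alpha(x)=0$, so $\rho(\alpha)$ is parabolic or $\pm I$, and if it were a nontrivial parabolic then all of $\rho(\Lambda)$ would fix the same point at infinity, whence $f_\gamma(x)=0$ for every $\gamma\in\Lambda$. The gap is in your ``key step'', and it is twofold. (i) The claim you need --- that when $\rho(\Lambda)$ is nontrivially parabolic the vanishing order of $f_{pe_1+qe_2}$ at $x$ is independent of $(p,q)\neq(0,0)$ --- is asserted (``turns out to be'', ``differ by units in the local ring'') but never established, and as stated it is false. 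After a local (degree-two) base change one can diagonalize the eigenvalue data of $\rho_u|_{\Lambda}$ and obtain $Z_x(f_{pe_1+qe_2}) = 2\,\mathrm{ord}_0(p h_1 + q h_2)$ for two local functions $h_1,h_2$ vanishing at the origin; this order genuinely jumps on a rank-one sublattice of $\Lambda$ whenever the leading terms of $h_1,h_2$ are rationally dependent, and nothing in your argument prevents such a jump from coexisting with some class of $\Lambda$ having nontrivial parabolic image. Indeed the whole point of the Culler--Shalen seminorm is that such jumps occur. (ii) More fundamentally, your entire local computation uses only the restriction of the representations to the abelian group $\Lambda$, i.e.\ eigenvalue and trace data. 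But the dichotomy the proposition must detect --- $\rho(\alpha)=\pm I$ versus $\rho(\alpha)$ a nontrivial parabolic --- is invisible to that data: the two cases have identical eigenvalues, identical traces, and identical characters on $\Lambda$. No analysis of the orders of vanishing of the functions $f_\gamma$, $\gamma\in\Lambda$, can decide whether the off-diagonal entry of $\rho(\alpha)$ vanishes, which is exactly what the conclusion requires.

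The proof in \cite{Cyclic-paper} therefore has to bring in elements $\delta\in\pi_1(M)$ outside $\Lambda$ (equivalently, the full four-dimensional subvariety $R_0$ and trace functions of words $\alpha\delta$, using that the relevant representations have non-abelian image so that some $\rho(\delta)$ does not fix the parabolic fixed point); it is precisely this interaction that converts ``the eigenvalue order jumps at $\alpha$'' into ``the off-diagonal entry of $\rho(\alpha)$ is zero'', and it is entirely absent from your sketch. Two smaller defects: the fiber of $t$ over a reducible character --- which is exactly the situation at stake --- is not a single $PSL_2(\C)$-orbit, so the ``fiber-bundle-like'' picture you invoke fails where you need it; and the passage from $SL_2(\C)$ to $PSL_2(\C)$ is not cost-free, since non-lifting components and representations conjugating into $N$ are the very reason Theorem~\ref{leila-theorem} of this paper splits into two cases, so your closing ``which it does not'' also needs an argument.
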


When we have zeros at ideal points we have the following property.

\begin{pro}\cite{Cyclic-paper}\label{ideal-point-inequality}
Let $x$ be an ideal point of $\widehat{X}_0$. Let $\alpha$ and $\beta$ be non-zero elements of $\Lambda$. Suppose that $\alpha$ is primitive and is not a boundary class,  and that
$$Z_x(f_{\alpha}) > Z_x(f_{\beta}).$$
Then there is a closed essential surface in $M$ which is incompressible in $M(\alpha)$.
\end{pro}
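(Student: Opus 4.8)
\emph{Proof strategy.} The idea is to run the Culler--Shalen ideal point construction and then use the two hypotheses on $\alpha$ to force the surface it produces to be closed. First I would let $v=v_x$ be the discrete valuation on $\C(\widehat X_0)\cong\C(X_0)$ attached to the ideal point $x$, pull it back to a valuation on (a finite extension of) the function field of $R_0$, and form the Bass--Serre tree $T=T_v$ of $PSL_2$ over that valued field, on which $\pi_1(M)$ acts through the tautological representation carried by $R_0$. Since $x$ is an ideal point, some trace function $\widehat I_\gamma=\widehat f_\gamma+4$ has a pole at $x$, so the corresponding $\gamma$ acts hyperbolically and the $\pi_1(M)$--action on $T$ has no global fixed vertex. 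I would then recall the standard facts that $g\in\pi_1(M)$ has zero translation length on $T$ precisely when $v(I_g)\ge 0$, i.e.\ precisely when $\widehat f_g$ has no pole at $x$, and the Culler--Shalen dichotomy \cite{Culler-Shalen,Cyclic-paper}: either $x$ detects a closed essential surface in $M$, or $x$ detects a boundary slope $\gamma_0$, and in that latter case $\widehat f_\gamma$ has a pole at $x$ for every slope $\gamma$ that is not a multiple of $\gamma_0$.

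Next I would feed in the hypothesis. From $Z_x(\widehat f_\alpha)>Z_x(\widehat f_\beta)\ge 0$ we get $Z_x(\widehat f_\alpha)\ge 1$; as a pole of $\widehat f_\alpha$ would force $Z_x(\widehat f_\alpha)=0$, this means $\widehat f_\alpha$ is regular at $x$ (it in fact vanishes there), so $\widehat f_\alpha$ has no pole at $x$. If the second alternative of the dichotomy held, with detected boundary slope $\gamma_0$, then $\alpha$ would be a multiple of $\gamma_0$, and being primitive, $\alpha=\pm\gamma_0$; but $\gamma_0$ is a boundary class, contradicting the hypothesis that $\alpha$ is not a boundary class. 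Hence the first alternative holds, and there is a closed essential surface $S$ in the interior of $M$, incompressible in $M$.

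Finally I would upgrade $S$ to be incompressible in $M(\alpha)$, arguing by contradiction. If $D\subset M(\alpha)$ compresses $S$, write $M(\alpha)=M\cup_{\partial M}V$ for the filling solid torus $V$; then $D$ lies neither in $M$ (as $S$ is incompressible there) nor in $V$ (as $\partial D\subset S$ misses $\partial M$), so $D\cap\partial V\ne\emptyset$, and the usual innermost-disk/cut-and-paste cleanup --- discarding trivial intersection circles by irreducibility of $V$, and noting that an essential circle of $D\cap\partial V$ bounding a subdisk of $D$ must be a meridian of $V$, since otherwise it would bound a disk in $M$ and contradict boundary irreducibility --- extracts from $D$ an essential surface in $M$ with a boundary component of slope $\alpha$ on $\partial M$ (cf.\ \cite{Cyclic-paper}); this again contradicts that $\alpha$ is not a boundary class, so $S$ is incompressible in $M(\alpha)$. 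The step I expect to be the main obstacle is not a topological computation but lining up this $PSL_2(\C)$ account with the classical $SL_2(\C)$ Culler--Shalen theory --- making precise the tree, the zero-translation-length criterion, and the boundary-slope detection over $PSL_2(\C)$ --- for which one invokes \cite{Boyer-Zhang}.
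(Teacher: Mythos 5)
The paper does not actually prove this proposition; it quotes it from \cite{Cyclic-paper} (it is essentially Proposition~1.3.9 there, transported to the $PSL_2(\C)$ setting via \cite{Boyer-Zhang}), so your attempt has to be measured against that source. Your first two paragraphs are in order: the ideal-point/Bass--Serre tree dichotomy, the observation that $Z_x(\widehat{f}_{\alpha})>Z_x(\widehat{f}_{\beta})\geq 0$ forces $\widehat{f}_{\alpha}$ to be finite (indeed zero) at $x$, and the use of primitivity together with the non-boundary-class hypothesis to exclude the detected-boundary-slope alternative do yield a closed essential surface in $M$ associated to $x$.

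The gap is in your final paragraph. It is not true that a closed essential surface $S\subset M$ which compresses in $M(\alpha)$ forces $\alpha$ to be a boundary class. Your cut-and-paste takes a compressing disk $D$ and produces the planar surface $P=D\cap M$, whose boundary lies partly on $S\subset \mathrm{int}(M)$ and partly on $\partial M$; this is not a properly embedded surface in $M$, so it cannot certify $\alpha$ as the boundary slope of an essential surface, and no innermost-disk cleanup repairs this. (Indeed there are only finitely many boundary slopes, yet a closed essential surface typically compresses under infinitely many fillings, e.g.\ whenever $M(\alpha)$ is small.) The survival of \emph{some} associated surface in $M(\alpha)$ is precisely the hard content of the cited proposition, and its proof uses the full strength of the strict inequality: one shows that if every closed essential surface associated to $x$ were compressible in $M(\alpha)$, then $Z_x(\widehat{f}_{\alpha})\leq Z_x(\widehat{f}_{\gamma})$ for all $\gamma\in\Lambda$, contradicting $Z_x(\widehat{f}_{\alpha})>Z_x(\widehat{f}_{\beta})$. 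In your write-up the inequality is consumed entirely in deducing that $\widehat{f}_{\alpha}$ has no pole at $x$ and then plays no further role; that is the symptom of the missing step. Note also that, were your topological argument valid, it would prove the false stronger statement that \emph{every} closed essential surface of $M$ remains incompressible in $M(\alpha)$, independent of the valuation data at $x$.
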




 \section{Proof of theorem \ref{my-theorem-I}}\label{small-seifert}

From now on we consider the case $M=Y_K$ the knot exterior described in the introduction.
 \begin{lem}\label{existence-of-nonabelian}
 Assume that $\textnormal{rank}_{\Z}\left(H_1(Y_K)\right)=1$. Then for each ordinary point $x\in \widehat{X}_0$ there is a representation $\overline{\rho} \in R_0$, with non-abelian image, such that $\chi_{\overline{\rho}}=\nu(x)$.
 \end{lem}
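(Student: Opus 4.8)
The plan is to examine the fibre of $\overline{t}\colon R_0\to X_0$ over $\chi:=\nu(x)$ and to prove it cannot be contained in the locus of representations with abelian image; surjectivity of $\overline{t}^{\nu}$ then produces the desired $\overline{\rho}$. Concretely, from the commutative square of normalisations and the surjectivity of $\overline{t}^{\nu}$, pick $\widetilde{\rho}\in R_0^{\nu}$ with $\overline{t}^{\nu}(\widetilde{\rho})=x$, so $F:=(\overline{t}^{\nu})^{-1}(x)\neq\emptyset$. Since $R_0^{\nu}$ is irreducible of dimension $4$ and $X_0^{\nu}$ is a smooth affine curve, the point $x$ is locally cut out by a single function, so by Krull's principal ideal theorem every irreducible component of $F$ has dimension $\geq 3$. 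Applying the finite morphism $\nu\colon R_0^{\nu}\to R_0$, the fibre $(\overline{t}|_{R_0})^{-1}(\chi)$ contains a closed subvariety of dimension $\geq 3$, all of whose points are representations of character $\chi$.

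\textbf{The abelian locus is small.} Let $R_0^{\mathrm{ab}}\subseteq R_0$ be the closed subset of representations with abelian image. A generic point of $X_0$ is the character of an irreducible representation (because $X_0$ is non-trivial and $\overline{X}^{irr}(M)$ is Zariski-open) which moreover is not the character of a Klein four-group representation (there being only finitely many homomorphisms $\pi_1(M)\to\Z/2\oplus\Z/2$); over such a point every representation in $R_0$ is irreducible with non-abelian image, so $R_0^{\mathrm{ab}}\subsetneq R_0$ and $\dim R_0^{\mathrm{ab}}\leq 3$. Consequently, if the fibre over $\chi$ lay inside $R_0^{\mathrm{ab}}$, then, being of dimension $\geq 3$, it would have to be an entire $3$-dimensional irreducible component $W$ of $R_0^{\mathrm{ab}}$ with $\overline{t}(W)=\{\chi\}$ --- a $3$-dimensional family of abelian representations all sharing the single character $\chi$.

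\textbf{Counting orbits, and the main obstacle.} Abelian representations of $\pi_1(M)$ of a fixed character $\chi$ form only finitely many $PSL_2(\C)$-conjugacy classes: a diagonalisable one is pinned down by $\chi$ up to conjugacy and the Weyl inversion, and the unipotent ones are parametrised by $\mathrm{Hom}(H_1(M),(\C,+))$ modulo scaling, which is finite precisely because $\textnormal{rank}_{\Z}H_1(M)=1$. Hence $W$ would be a finite union of conjugation orbits and so would contain one of dimension $3$; the corresponding representation would have abelian image with finite centraliser in $PSL_2(\C)$. But up to conjugacy the abelian subgroups of $PSL_2(\C)$ are the trivial group, subgroups of a maximal torus, subgroups of a maximal unipotent subgroup, and the Klein four-group, and only the Klein four-group has finite centraliser. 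Such a representation is \emph{irreducible}, so $\chi$ would be an irreducible character, and then the whole fibre over $\chi$ would be this one $3$-dimensional orbit of a (Klein four-group) representation. Ruling out this last possibility --- that $\nu(x)$ is a Klein four-group character lying on the non-trivial curve $X_0$ --- is the crux of the proof, and I expect it to be the only delicate point, since the coarse inequality $\dim R_0^{\mathrm{ab}}\leq 3$ merely equals, rather than beats, the dimension of the fibre, and an irreducible representation need not have non-abelian image. I would dispose of it using the standing hypotheses of Theorem~\ref{my-theorem-I} on $M=Y_K$ (that $\pi_1(Y)$ has only diagonalisable $PSL_2(\C)$-representations and that $K$ is small): a Klein four-group character is non-diagonalisable and should be isolated in $\overline{X}(M)$ rather than lie on a curve; in particular if $H_1(Y)$ has odd order the point is moot, since then $H_1(Y_K;\Z/2)=\Z/2$ has no Klein four-group quotient. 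Granting this, every component of the fibre over $\nu(x)$ meets $R_0\setminus R_0^{\mathrm{ab}}$, which is exactly the assertion of the lemma.
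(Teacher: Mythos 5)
Your dimension count gets you to the right bottleneck, but the proof is not complete: the Klein four-group case, which you yourself identify as ``the crux,'' is left open, and the lemma genuinely fails without an argument there. Concretely, your Krull estimate gives every component of the fibre over $\chi=\nu(x)$ dimension $\geq 3$, while the abelian locus has dimension $\leq 3$; since these bounds meet rather than separate, you are forced into the scenario of a full $3$-dimensional conjugation orbit of a representation with abelian image and finite centraliser, i.e.\ image the Klein four-group. Your proposed ways out do not work as stated. First, you invoke ``the standing hypotheses of Theorem~\ref{my-theorem-I}'' (diagonalisability of representations of $\pi_1(Y)$, smallness of $K$), but the lemma assumes only $\textnormal{rank}_{\Z}(H_1(Y_K))=1$ and is used as a free-standing statement, so you cannot import those hypotheses. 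Second, the assertion that a Klein four-group character ``should be isolated in $\overline{X}(M)$ rather than lie on a curve'' is unjustified and is false in general: a Klein four-group representation is irreducible, its character lies in the Zariski-open set $\overline{X}^{irr}(M)$, and such characters are well known to occur as non-isolated points of positive-dimensional components (they play a distinguished, non-isolated role in the Boyer--Zhang analysis of $PSL_2(\C)$-character varieties). The parenthetical fallback ``if $H_1(Y)$ has odd order the point is moot'' again appeals to a hypothesis not available, and in any case the relevant group is $H_1(Y_K;\Z/2)$, which can surject onto $(\Z/2)^2$ even when $\textnormal{rank}_{\Z}H_1(Y_K)=1$, as soon as the torsion has even order. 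So the case you must exclude can actually occur at the level of generality you are working in, and nothing in your argument excludes it.

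For comparison, the paper takes an entirely different route: it lifts to the $SL_2(\C)$-character variety, chooses a curve $Z_0$ over $X_0$ and a component $S_0$ of the representation variety over it, and applies Proposition~2.8 of \cite{Boyer} twice --- once to see that the rank hypothesis forces every character on a non-trivial curve to be non-trivial (a trivial character on such a curve would force $b_1\geq 2$), and once to produce, over any non-trivial character on a non-trivial curve, a representation with non-abelian image --- then pushes that representation down to $PSL_2(\C)$. This sidesteps your dimension count entirely by quoting a ready-made structural result about fibres of $t$ over non-trivial characters. If you want to salvage your approach, the missing ingredient is precisely an argument that the fibre of $\overline{t}|_{R_0}$ over a point of a non-trivial curve cannot reduce to a single closed $3$-dimensional orbit of a Klein four-group representation; that is essentially the content of the cited proposition, not something your coarse bounds give you for free.
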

\begin{proof}

  Let $Z_0\subset X(Y_K)$ ($SL_2(\C)$-character variety) be an irreducible curve component of $\pi^{-1}(X_0)$, and $S_0$ a component of $t^{-1}(Z_0)$.

  Let $X(\Gamma)$ be the $SL_2(\C)$-character variety of a finitely generated group $\Gamma$. In \cite[Proposition 2.8]{Boyer}  it is shown that if $x$ is a reducible trivial character in a non-trivial curve inside $X(\Gamma)$ then the first Betti number satisfies $b_1(\Gamma) \geq 2$.  Since   $\textnormal{rank}_{\Z}\left(H_1(Y_K)\right)=1$ by assumption and $b_1\left(\pi_1(Y_K)\right)=\textnormal{rank}_{\Z}\left(H_1(Y_K)\right)$, any character in a non-trivial curve inside $X(Y_K)$ is non-trivial, in particular any element of $Z_0$ is non-trivial. The same Proposition 2.8 of \cite{Boyer} applied to $\pi_1(Y_K)$ implies that if a character $z\in Z_0$ is non-trivial then there is a representation $\rho \in S_0 \cap t^{-1}(z)$ with non-abelian image.  Since for each $x\in \widehat{X}_0$,  $\nu(x)\in X_0$ we can take  $z\in \pi^{-1}(\nu(x))$ to get such a representation $\rho$ and then take the corresponding $PSL_2(\C)$ representation $\overline{\rho}$.
\end{proof} 
 
 
\begin{lem}\label{non-zero-norm} If $Y_K$ is small then for  each  curve $X_0 \subset X(Y_K)$,  $||.||_{X_0}$ is not identically zero. 
\end{lem}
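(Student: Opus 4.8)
The plan is to argue by contradiction: suppose $||\cdot||_{X_0} \equiv 0$ on $H_1(\partial Y_K;\R)$ for some non-trivial curve component $X_0 \subset \overline{X}(Y_K)$. By the definition of the Culler--Shalen semi-norm recalled above, this means that $f_\gamma | X_0$ is constant for every $\gamma \in \Lambda = \pi_1(\partial Y_K)$; equivalently $\widehat{f}_\gamma$ is constant on $\widehat{X}_0$ for all boundary classes $\gamma$. The strategy is to show that this forces the boundary holonomy $\overline\rho(\pi_1(\partial Y_K))$ to be essentially constant along $X_0$ in a way that contradicts the fact that $X_0$ is a curve (one-dimensional) and contains irreducible characters, the contradiction ultimately coming from an ideal point of $\widehat{X}_0$ together with the smallness hypothesis via Proposition~\ref{ideal-point-inequality}.

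First I would pick two independent slopes $\alpha, \beta \in \Lambda$ generating a finite-index subgroup of $H_1(\partial Y_K;\Z)$, say with $\alpha$ primitive and not a boundary class (possible after a change of basis, since $Y_K$ has only finitely many boundary slopes as $Y_K$ is small). Since $\widehat{X}_0$ is a smooth projective curve and $X_0$ is affine, $\widehat{X}_0 \setminus X_0$ is a non-empty finite set of ideal points; fix an ideal point $x$. A standard fact from Culler--Shalen theory (the one used to prove that ideal points detect essential surfaces) is that at an ideal point $x$ the trace functions $\widehat{f}_\gamma$ for $\gamma$ ranging over $\pi_1(Y_K)$ cannot all be holomorphic at $x$ --- some $\widehat{f}_\gamma$ has a pole at $x$, and in fact one can arrange that some boundary class has a pole there, or else the associated action on the Bass--Serre tree fixes a point. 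If for every boundary class $\gamma$ the function $\widehat{f}_\gamma$ were constant, then in particular none has a pole at $x$; the Bass--Serre tree machinery then produces a closed essential surface in $Y_K$, contradicting smallness --- unless the ideal point ``is not really detected by the boundary,'' which is precisely the situation governed by Proposition~\ref{ideal-point-inequality}.

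More concretely, here is the cleaner route I would write up. Assume $||\cdot||_{X_0}\equiv 0$. Then for the primitive non-boundary slope $\alpha$ chosen above, $\widehat{f}_\alpha$ is constant, so $Z_x(\widehat f_\alpha)$ is the same at every point; similarly $\widehat f_\beta$ is constant. Thus $Z_x(f_\alpha) = Z_x(f_\beta)$ at every $x$, and in particular the hypothesis $Z_x(f_\alpha) > Z_x(f_\beta)$ of Proposition~\ref{ideal-point-inequality} is never met, so that proposition gives nothing directly --- meaning I instead need the positive input: a non-trivial curve $X_0$ carries irreducible characters whose boundary restriction is non-central for a Zariski-dense subset (here Lemma~\ref{existence-of-nonabelian} and the diagonalisability-free part of the setup are relevant), and the trace function $\widehat f_\lambda$ of the rational longitude $\lambda$ must be non-constant because the eigenvalue function of $\overline\rho(\mu)$ for a meridian-type class $\mu$ is a non-constant rational function on $\widehat X_0$ (otherwise $X_0$ would be zero-dimensional, as the boundary holonomy up to conjugacy would be constant and the rest of $\pi_1(Y_K)$ is generated by finitely many elements whose traces would then also be constrained).

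The main obstacle, and the step I expect to require the most care, is ruling out the degenerate possibility that \emph{every} boundary trace function is constant while $X_0$ is still a genuine curve: this is exactly where one must invoke that $X_0$ is \emph{non-trivial} (contains an irreducible character) and that $Y_K$ is \emph{small}. The clean way is: if all $\widehat f_\gamma|X_0$, $\gamma\in\Lambda$, were constant, then at any ideal point $x$ of $\widehat X_0$ the associated splitting of $\pi_1(Y_K)$ over the Bass--Serre tree would have $\pi_1(\partial Y_K)$ acting with a fixed point (since its elements have bounded, indeed constant, trace), hence the dual essential surface can be isotoped to have no boundary, i.e.\ is closed and essential in $Y_K$ --- contradicting smallness. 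Since $X_0$ is non-trivial it does have ideal points where the Culler--Shalen construction applies, so we reach a contradiction. Therefore $||\cdot||_{X_0}$ is not identically zero. I would present this last paragraph as the core of the argument and cite \cite{Culler-Shalen, Cyclic-paper, Boyer-Zhang} for the tree-theoretic detection of closed essential surfaces at ideal points.
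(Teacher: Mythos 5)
Your argument is correct and follows essentially the same route as the paper: the paper's entire proof is to cite \cite[Proposition 5.5]{Boyer-Zhang} for the implication ``$\|\cdot\|_{X_0}\equiv 0$ implies $Y_K$ contains a closed essential surface,'' and then invoke smallness --- which is exactly the content of your final paragraph (the Bass--Serre tree argument at an ideal point, with $\pi_1(\partial Y_K)$ fixing a vertex because all its trace functions stay bounded). The exploratory middle paragraphs (in particular the claim that constant boundary traces would force $X_0$ to be zero-dimensional, which is false in general) are not needed and could be deleted.
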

 \begin{proof} By \cite[Proposition 5.5]{Boyer-Zhang} if $||.||_{X_0} \equiv 0$ then $Y_K$ contains a closed essential surface, this is not possible if $Y_K$ is small. 
 \end{proof}

\begin{lem}\label{correspondence}  If $s$ and $r$ be two slopes on $\partial Y_K$ such that $\pi_1(Y_K(s))\cong \pi_1( Y_K(r))$. Then there is a one to one correspondence between $A(s)$ and $A(r)$, and  between $B(s)$ and $B(r)$,
\end{lem}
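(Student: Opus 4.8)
Fix an isomorphism $\phi\colon\pi_1(Y_K(s))\to\pi_1(Y_K(r))$. The plan is to realise $A(s)\cup B(s)$ as the set of characters of non-abelian $PSL_2(\C)$-representations of $\pi_1(Y_K(s))$, viewed inside $\overline{X}(Y_K)$ through the filling map, with $A(s)$ singling out those whose image conjugates into $N$; then to transport this picture to $\pi_1(Y_K(r))$ along $\phi$. The whole point is that the two conditions defining the splitting $A$ versus $B$ — being non-abelian and conjugating into $N$ — depend only on the image subgroup of $PSL_2(\C)$, and that image survives the relevant group homomorphisms unchanged.

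First I would record the standard factorisation. Write $q_s\colon\pi_1(Y_K)\twoheadrightarrow\pi_1(Y_K)/\langle\langle s\rangle\rangle\cong\pi_1(Y_K(s))$ for the quotient map, and similarly $q_r$ for $r$. A representation $\overline{\rho}\in\overline{R}(Y_K)$ satisfies $\overline{\rho}(s)=\pm I$ exactly when $s\in\ker\overline{\rho}$, i.e.\ $\langle\langle s\rangle\rangle\subseteq\ker\overline{\rho}$, i.e.\ $\overline{\rho}=\overline{\rho}_s\circ q_s$ for a unique (because $q_s$ is onto) representation $\overline{\rho}_s$ of $\pi_1(Y_K(s))$. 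Given such an $\overline{\rho}$, set
\[
\Phi(\overline{\rho})\ :=\ \overline{\rho}_s\circ\phi^{-1}\circ q_r\ \in\ \overline{R}(Y_K).
\]
Then $\Phi(\overline{\rho})(r)=\overline{\rho}_s\bigl(\phi^{-1}(q_r(r))\bigr)=\pm I$ since $q_r(r)$ is trivial, and because $q_r$ and $\phi^{-1}$ are surjective the image subgroup of $PSL_2(\C)$ is unchanged at every step: $\Phi(\overline{\rho})$ has the same image as $\overline{\rho}$. Running the same construction with $\phi$ in place of $\phi^{-1}$ yields a two-sided inverse, so $\Phi$ is a bijection between $\{\overline{\rho}\in\overline{R}(Y_K):\overline{\rho}(s)=\pm I\}$ and $\{\overline{\sigma}\in\overline{R}(Y_K):\overline{\sigma}(r)=\pm I\}$.

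Next I would descend $\Phi$ to characters. From the identity $\chi_{\overline{\rho}\circ\psi}=\chi_{\overline{\rho}}\circ\psi$ for a group homomorphism $\psi$, one sees that $\chi_{\Phi(\overline{\rho})}$ depends only on $\chi_{\overline{\rho}}$: explicitly $\chi_{\Phi(\overline{\rho})}(g)=\chi_{\overline{\rho}}(g')$ whenever $q_s(g')=\phi^{-1}(q_r(g))$, the right-hand side being independent of the choice of $g'$ because $\langle\langle s\rangle\rangle\subseteq\ker\overline{\rho}$. Hence $\Phi$ induces a bijection between the subsets $\{\chi_{\overline{\rho}}:\overline{\rho}(s)=\pm I\}$ and $\{\chi_{\overline{\sigma}}:\overline{\sigma}(r)=\pm I\}$ of $\overline{X}(Y_K)$. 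Since $\Phi$ preserves the image subgroup of $PSL_2(\C)$ verbatim, it matches non-abelian representations conjugating into $N$ with non-abelian representations conjugating into $N$, and non-abelian representations not conjugating into $N$ with those not conjugating into $N$. Unwinding the definitions of $A(\gamma)$ and $B(\gamma)$ — which say that a character lies in $A(s)$ (resp.\ $B(s)$) iff it is the character of some non-abelian $\overline{\rho}$ with $\overline{\rho}(s)=\pm I$ whose image does (resp.\ does not) conjugate into $N$ — this is precisely the statement that the induced map on characters restricts to bijections $A(s)\to A(r)$ and $B(s)\to B(r)$.

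I do not expect a real obstacle here. The only things needing a little care are the well-definedness of the descended map on $\overline{X}(Y_K)$, which is immediate from the displayed trace identity, and the bookkeeping that $A(s)$ and $B(s)$ are genuinely cut out by the existence of a witness representation killing $s$, which is immediate from their definitions together with the factorisation through $q_s$. Conceptually, everything reduces to the remark that ``non-abelian'' and ``conjugates into $N$'' are preserved under precomposition with the surjections $q_s$, $q_r$ and with the isomorphism $\phi$.
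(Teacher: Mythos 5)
Your proposal is correct and follows essentially the same route as the paper: factor a representation killing $s$ through $\pi_1(Y_K(s))$, transport it along the isomorphism and the quotient map $q_r$ to get a representation killing $r$ with identical image subgroup, and observe that ``non-abelian'' and ``conjugates into $N$'' depend only on that image. The one place you diverge is the well-definedness on characters: the paper verifies this by casework (conjugation for irreducible characters, explicit upper-triangular matrices for reducible ones), whereas your identity $\chi_{\overline{\rho}\circ\psi}=\chi_{\overline{\rho}}\circ\psi$ handles both cases uniformly and is cleaner.
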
 
 \begin{proof}
 
  Let $\Psi: \pi_1(Y_K(r)) \to \pi_1( Y_K(s))$ be an isomorphism, $p_{s}: \pi_1(Y_K) \to \pi_1(Y_K(s))$, and $ \ p_{r}:\pi_1(Y_K) \to \pi_1(Y_K(r))$ be the obvious projections. Let $\chi_{\overline{\rho}}\in A(s)$, we have a representation  $\Phi_{s} (\overline{\rho}): \pi_1(Y_K(s)) \to PSL_2(\C)$   obtained via the following factorisation  of $\overline{\rho}$

\begin{center}
\begin{tikzpicture}[node distance=2cm,auto]
  \node (G) {$\pi_1(Y_K)$};
  \node (H2) [below of=G] {$\pi_1(Y_K(s))$};
  \node (PSL) [right of=H2,node distance=3cm] {$ PSL_2(\C)$};
  \draw[->] (G) to node [swap] {$p_{s}$} (H2);
  \draw[->,dashed] (H2) to node [swap] {$\Phi_{s}(\overline{\rho})$} (PSL);
  \draw[->] (G) to node [ ] {$\overline{\rho}$} (PSL);
\end{tikzpicture}
\end{center}
We also have an equivalent representation  $\Phi_{r} (\overline{\rho}): Y_K(r) \to PSL_2(\C)$   for the $r$-case. 
Let $\overline{\rho}'$ be the composition  $\overline{\rho}':= \Phi (\overline{\rho}) \circ \Psi \circ p_{r}$
 \begin{center}
\begin{tikzpicture}[node distance=2cm,auto]
  \node (G) {$\pi_1(Y_K)$};
  \node (H1) [below of=G] {$\pi_1(Y_K(r))$};
  \node (H2) [right of=H1, node distance=3cm] {$\pi_1(Y_K(s))$};
  \node (PSL) [right of=G,node distance=3cm] {$ PSL_2(\C)$};
  \draw[->] (G) to node [swap] {$p_{r}$} (H1);
  \draw[->] (H1) to node [swap] {$\Psi$} (H2);
  \draw[->] (H2) to node [swap] {$\Phi(\overline{\rho})$} (PSL);
  \draw[->, dashed] (G) to node  {$\overline{\rho}'$} (PSL);
\end{tikzpicture}
\end{center}
 The maps $p_{r}, p_{s}$ and $\Psi$ are all surjective so $\textnormal{im} \overline{\rho} = \textnormal{im} \overline{\rho}'$. In particular if $\overline{\rho}$ does not conjugate into $N$ then neither does $\overline{\rho}'$, and if $\overline{\rho}$ is irreducible then so is $\overline{\rho}'$.
The representation $\overline{\rho}'$ satisfies $\overline{\rho}'(r)=\pm I$ by construction. Next we need to check that if $\chi_{\overline{\rho_1}}=\chi_{\overline{\rho}_2}$ then $\chi_{\overline{\rho}_1'}=\chi_{\overline{\rho}_2'}$.

We first assume that  $\chi_{\overline{\rho}_1}$ is an irreducible character. Therefore $\overline{\rho}_2=\overline{g}\ \overline{\rho}_1\ \overline{g}^{-1}$ for some \\ $g\in SL_2(\C)$. Then we deduce that
$$\begin{aligned}
\overline{\rho}_2' &= \Phi_{s}\left(\overline{\rho}_2\right)\circ \Psi \circ p_{r}  \\
  & = \Phi_{s}\left(\overline{g}\ \overline{\rho}_1 \ \overline{g}^{-1}\right)\circ \Psi \circ p_{r}\\ 
  & =  \left(\overline{g}\ \Phi_{s}(\overline{\rho}_1) \ \overline{g}^{-1}\right)\circ \Psi \circ p_{r} \\
  & = \overline{g}\ \left(\Phi_{s}(\overline{\rho}_1) \circ \Psi \circ p_{r} \right)\  \overline{g}^{-1} \\
  & = \overline{g} \ \overline{\rho}'_1  \overline{g}^{-1} 
\end{aligned}
$$
which implies  $\overline{\rho}_2'=\overline{g}\ \overline{\rho}_1'\ \overline{g}^{-1}$. Therefore $\chi_{\overline{\rho}_1'}=\chi_{\overline{\rho}_2'}$. If $\gamma \in \pi_1(\partial Y_K)$ we denote $X^{irr}(\gamma)$ and $X^{red}(\gamma)$  the sets 
$$X^{irr}(\gamma)=\left\lbrace \chi_{\overline{\rho}} \in  \overline{X}(Y_K)\ |\ \overline{\rho}(\gamma)=\pm I, \ \ \text{and} \ \overline{\rho}  \ \textnormal{ is irreducible}\right\rbrace$$ 
$$X^{red}(\gamma)=\left\lbrace \chi_{\overline{\rho}} \in  \overline{X}(Y_K)\ |\ \overline{\rho}(\gamma)=\pm I, \ \ \text{and} \ \overline{\rho}  \ \textnormal{ is reducible}\right\rbrace.$$

 We then have a well defined map
$$F: X^{irr}(s) \longrightarrow X^{irr}(r),\ \ \overline{\rho} \longmapsto \overline{\rho}'.$$
The map $F$ sends $A(s)$ to $A(r)$, and  $B(s)$ to $B(r)$. The next step is to extend $F$ to the reducible representations. 

 Now assume $x=\chi_{\overline{\rho}_1}=\chi_{\overline{\rho}_2}$ is a reducible character. By analogy with \cite{Boyer} there exists a representation $a: \pi_1(Y_K)\to \C^*$ such $t^{-1}(x)=R^a_x \cup R^{a^{-1}}_x$ where
 $$R^a_x=\left\{ A\overline{\rho} A^{-1}| A\in PSL_2(\C),\ \overline{\rho} \in U^a_x\right\},$$
 $$ R^{a^{-1}}_x=\left\{ A\overline{\rho} A^{-1}| A\in PSL_2(\C),\ \overline{\rho} \in U^{a^{-1}}_x\right\}$$
 $$U^a_x=\left\{\textnormal{representation }\  \overline{\rho} = \pm
\left( \begin{array}{cc}
a & b  \\
0 & a^{-1} \\
\end{array} \right)  
  \right\},$$
  $$U^{a^{-1}}_x=\left\{\textnormal{representation }\  \overline{\rho} = \pm
\left( \begin{array}{cc}
a^{-1} & b  \\
0 & a \\
\end{array} \right)  
  \right\}$$
 
If $\overline{\rho}_1$ and $\overline{\rho}_2$ are conjugate, by the same argument as for irreducible characters $\chi_{\overline{\rho}_1'}=\chi_{\overline{\rho}_2'}$. Assume that  $\overline{\rho}_1$ and $\overline{\rho}_2$ are not conjugate. Without loss of generality we can suppose that 

$$\overline{\rho}_1=\pm \left( \begin{array}{cc}
a & b  \\
0 & a^{-1} \\
\end{array} \right)  
  \ \ \ \textnormal{and }\  \overline{\rho}_2 = \pm A
\left( \begin{array}{cc}
a^{-1} & b  \\
0 & a \\
\end{array} \right)  A^{-1},\ \ \  \text{for some} \ \ \ A \in PSL_2(\C).
$$

Since $\Phi(\overline{\rho}_i)$ and  $\overline{\rho}_i,\  \ i=1,2$ have the same image we can use the same matrices to represent $\Phi(\overline{\rho}_i), \ \ i=1,2$. Hence

$$\overline{\rho}'_1=\Phi (\overline{\rho}_1) \circ \Psi \circ p_{r}=\pm \left( \begin{array}{cc}
a\circ \Psi \circ p_{r} & b\circ \Psi \circ p_{r}  \\
0 & a^{-1}\circ \Psi \circ p_{r} \\
\end{array} \right)  $$

$$\textnormal{and }\  \overline{\rho}'_2=\Phi (\overline{\rho}_2) \circ \Psi \circ p_{r} = \pm A
\left( \begin{array}{cc}
a^{-1}\circ \Psi \circ p_{r}  & b\circ \Psi \circ p_{r}  \\
0 & a\circ \Psi \circ p_{r} \\
\end{array} \right)  A^{-1}.
$$

Therefore
$$\textnormal{trace}(\rho'_1)=\pm \left[ a\circ \Psi \circ p_{r}  +a^{-1}\circ \Psi \circ p_{r} \right]$$
$$\textnormal{trace}(\rho'_2)=\pm \left[ a^{-1}\circ \Psi \circ p_{r}  +a\circ \Psi \circ p_{r} \right]$$
It follows that $\textnormal{trace}(\rho'_1)^2=\textnormal{trace}(\rho'_2)^2$, that is $\chi_{\rho_1'}=\chi_{\rho_2'}$.
Thus $F$ is well defined on the set of reducible characters. 

Finally, we show that $F$ is bijective.  If $\overline{\rho}'\in X^{irr}(r)$ (resp. $X^{red}(r)$) , we get  $\overline{\rho}\in X^{irr}(s)$ (resp. $X^{red}(s)$) as follow: we first define $\Phi_{s}(\overline{\rho})$ to be  $\Phi_{s}(\overline{\rho})=\Phi_{r}(\overline{\rho}')\circ \Psi^{-1}$ then $\overline {\rho}'=\Phi_{s}(\overline{\rho})\circ p_{s}$. This uniquely determine $\overline{\rho}$, therefore the map $F$ is bijective. 
\end{proof}


\begin{lem}\label{mitovy-norm}
 Let $s$ be a slope on $\partial Y_K$ which is not a boundary slope. Assume that $Y_K(s)$ is small-Seifert, $Y_K$ is small, $Y$ has only diagonalisable $PSL_2(\C)$-representations and $b_1(Y_K(s))=0$. If  $r$ is a slope such that $Y_K(r) \cong Y_K(s)$, then $||r||=||s||$.
\end{lem}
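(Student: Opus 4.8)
The strategy is to show that the Culler--Shalen semi-norms on $H_1(\partial Y_K;\R)$ are homeomorphism invariants, up to the correspondence supplied by Lemma~\ref{correspondence}. Concretely, if $Y_K(r)\cong Y_K(s)$ then $\pi_1(Y_K(r))\cong\pi_1(Y_K(s))$, so the map $F$ of Lemma~\ref{correspondence} gives bijections $A(s)\leftrightarrow A(r)$ and $B(s)\leftrightarrow B(r)$. The plan is to compute $\|s\|$ and $\|r\|$ by localizing the degree of $\widehat f_{s}$ (resp. $\widehat f_{r}$) at the points where it is large, namely the characters $\chi_{\overline\rho}$ with $\overline\rho(s)=\pm I$ (resp. $\overline\rho(r)=\pm I$), and to match these contributions across $F$. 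First I would fix a non-trivial curve $X_0\subset\overline X(Y_K)$ and recall that $\|s\|_{X_0}=\deg(\widehat f_{s})=\sum_{x\in\widehat X_0}Z_x(\widehat f_{s})$. By Lemma~\ref{non-zero-norm} the semi-norm is not identically zero, and the hypothesis that $\|s\|$ is controlled forces $f_s|X_0$ to be non-constant on the relevant curves; since $s$ is not a boundary slope, Proposition~\ref{ideal-point-inequality} rules out extra zeros at ideal points exceeding those of a generic class, so the "excess" of $\widehat f_s$ over the minimal value is concentrated at ordinary points.

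Next I would analyze these ordinary points. Fix an ordinary point $x\in\widehat X_0$ with $Z_x(\widehat f_s)$ strictly larger than the generic value; by Proposition~\ref{inequality} every $\overline\rho\in R_0$ over $x$ satisfies $\overline\rho(s)=\pm I$, and by Lemma~\ref{existence-of-nonabelian} (using $\mathrm{rank}_{\Z}H_1(Y_K)=1$, which follows from $b_1(Y_K(s))=0$ together with the half-lives-half-dies principle) we may take $\overline\rho$ non-abelian. The hypothesis that $Y$ has only diagonalisable $PSL_2(\C)$-representations ensures $\overline\rho(\pi_1(\partial Y_K))\nsubseteq\{\pm I\}$, since $\pi_1(\partial Y_K)$ maps to $\pi_1(Y)$ up to the peripheral subgroup and cannot be killed; moreover since $Y_K(s)$ is small-Seifert, its fundamental group has no non-abelian $PSL_2(\C)$-representation with $H^1(Y_K(s);Ad_{\overline\rho})\neq 0$ being an obstruction — more precisely, small-Seifert manifolds are known to have only isolated characters of irreducible representations, giving $H^1(Y_K(s);Ad_{\rho})=0$. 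Then Theorem~\ref{leila-theorem} applies: $Z_x(\widehat f_s)-Z_x(\widehat f_\beta)$ equals $1$ or $2$ according to whether $\overline\rho$ conjugates into $N$, i.e. according to whether $\chi_{\overline\rho}\in A(s)$ or $\chi_{\overline\rho}\in B(s)$. Summing, $\|s\|_{X_0}$ is expressible as (a $\beta$-contribution) plus $\sum_{A(s)\cap X_0}1 + \sum_{B(s)\cap X_0}2$, with the analogous formula for $r$.

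Then I would sum over all non-trivial curves $X_1,\dots,X_k$ to obtain the absolute semi-norm. The bijection $F$ of Lemma~\ref{correspondence} respects the decomposition into $A$- and $B$-parts; the remaining point is that $F$ also permutes the curve components $X_i$ (it is induced by a group isomorphism, hence algebraic, carrying $\overline X(Y_K)$ to itself and non-trivial curves to non-trivial curves), so the per-curve counts match up after this permutation. The "$\beta$-contribution" is handled by choosing $\beta$ to be a fixed generic primitive class not equal to $s$ or $r$, for which $f_\beta|X_i\neq 0$ on every component and $Z_x(\widehat f_\beta)$ takes its generic (minimal) value at every ordinary $x$ under consideration; one checks $\deg\widehat f_\beta$ is itself the same computed from either side, or more simply one observes that $\|s\|-\|r\|=\sum_i\big(\#(A(s)\cap X_i)+2\#(B(s)\cap X_i)\big)-\sum_i\big(\#(A(r)\cap X_i)+2\#(B(r)\cap X_i)\big)=0$ by the bijection, since the $\beta$-terms are identical. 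The main obstacle I anticipate is the verification of the hypothesis $\rho(\pi_1(\partial Y_K))\nsubseteq\{\pm I\}$ and $H^1(Y_K(s);Ad_{\rho})=0$ for all the relevant $\rho$ — this is exactly where the two special assumptions (only diagonalisable representations of $\pi_1(Y)$, and $Y_K(s)$ small-Seifert) are consumed, and making this airtight requires knowing precisely which representations of a small-Seifert fibered space are rigid. Once that is in place, the degree bookkeeping and the invariance of the $A/B$ counts under $F$ finish the proof.
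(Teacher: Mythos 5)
Your proposal follows essentially the same route as the paper's proof: write $\|s\|$ as a slope-independent baseline plus local excesses of $Z_x(\widehat{f}_{s})$, kill the ideal-point excesses via Proposition~\ref{ideal-point-inequality} (using that $s$ is not a boundary slope and $Y_K(s)$ is small-Seifert with $b_1=0$), identify each ordinary-point excess as $1$ or $2$ via Lemma~\ref{existence-of-nonabelian}, Proposition~\ref{inequality} and Theorem~\ref{leila-theorem}, and then match the counts $\sharp A(\cdot)$ and $\sharp B(\cdot)$ through Lemma~\ref{correspondence}. The only cosmetic differences are that the paper uses the pointwise minimum $m_x$ over $\Lambda\setminus\{0\}$ rather than a fixed generic class $\beta$, and it counts the $A$- and $B$-contributions globally over the disjoint union of the $\widehat{X_i}$, so no permutation of curve components needs to be tracked.
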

\begin{proof}

It is known from \cite{plane-curve} that $X(Y_K)$ has no 0-dimensional component. Let $X_1,\cdots, X_k$ be the curve components of $X(Y_K)$. Since $Y_K$ is small $X(Y_K)$ is the union of these components. If $s \in \Lambda$ is not a boundary slope, then Lemma \ref{non-zero-norm} allows us to write the $X_i$-norm of $s$ in terms of the zeros of $f_{s}|_{X_i}$ for each $i\in \left\{1,\cdots,k\right\}$

$$||s||_{X_i} = \sum_{x\in \widehat{X_i}}\  Z_x(\widehat{f}_{s}|_{\widehat{X_i}}).$$

Let $ \widehat{X}$ be the abstract disjoint union of all the  $\widehat{X_i}$, $i\in \left\{1,\cdots,k\right\}$, then we have the following formula for the absolute semi-norm 
$$||s||= \sum_{x\in \widehat{X}}\  Z_x(\widehat{f}_{s})$$
where $\widehat{f}_{s}$ is understood to be the restriction to the appropriate component. Let $x\in \widehat{X}$, we define  the number $m_x$ and $m_0$ to be
$$m_x = \min \left\lbrace Z_x(\widehat{f}_{\gamma})\ |\  \gamma \in \Lambda\setminus\left\lbrace 0\right\rbrace \right\rbrace,  \ \ \ \textnormal{and} \ \ \ m_0= \sum_{x\in \widehat{X}}\  m_x.$$
We can then deduce 
$$
||s|| = m_0-m_0 + \sum_{x\in \widehat{X}}\  Z_x(\widehat{f}_{s}) 
= m_0+   \sum_{x\in \widehat{X}}\  \left(Z_x(\widehat{f}_{s})-m_x \right).
$$
\bigskip

Let us suppose that $x$ is an ideal point of $\widehat{X}_0\subset \widehat{X}$. If $Z_x(\widehat{f}_{s})-m_x >0 $ then $Z_x(\widehat{f}_{s}) >Z_x(\widehat{f}_{\gamma})$ for some $\gamma \in \Lambda \setminus \left\{0\right\}$. Since $s$ is primitive and is not a  boundary class, Lemma \ref{ideal-point-inequality} implies that there is a closed surface in $Y_K$ which is incompressible in $Y_K(s)$. This situation does not occur if we assume $Y_K(s)$ is small-Seifert with $b_1(M(s))=0$. Therefore we always have $Z_x(\widehat{f}_{s})-m_x = 0 $ at an ideal point.

Let $x\in \widehat{X}$ be an ordinary point, $x$ is contained in some $\widehat{X}_0$ and by Lemma \ref{existence-of-nonabelian} there is a representation $\overline{\rho} \in R_0$  with non-abelian image, such that $\chi_{\overline{\rho}}=\nu(x)$. Let $\tilde{\rho}=\nu^{-1}(\rho)$, we have the following equality
$$\nu\left(t^{\nu}(\tilde{\rho})\right)=t\left(\rho\right) =\nu(x).$$
The normalization map $\nu: X_0^{\nu} \to X_0$ is an ``isomorphism" outside singular points, so if $x$ is a smooth point then $t^{\nu}(\tilde{\rho})=x$. This smoothness is provided by Theorem A of \cite{Boyer}. A direct consequence of this is that for an ordinary point $x$, $\nu(x)$ is contained in only one irreducible component. Therefore if we consider instead of $\widehat{X}$, the ``natural" union $\widehat{X_1}\cup \cdots \cup \widehat{X_k}$, we can write the absolute semi-norm of $s$ as 

$$||s||= \sum_{x\in \widehat{X_1}\cup \cdots \cup \widehat{X_k}}\  Z_x(\widehat{f}_{s}).$$

 Now if we assume that $Z_x(\widehat{f}_{s}) > m_x$ then by Lemma \ref{inequality}  the representation $\rho=\nu\left(\tilde{\rho}\right)$  satisfies $\rho(s)=\pm I$. Since $Y_K(s)$ is Small seifert, $H^1(Y_K(s);Ad_{\rho})=0$. If we add the extra condition that $Y$ have only Abelian  $PSL_2(\C)$-representations then $\rho(\Lambda) \nsubseteq \left\{\pm I\right\}$ and all the hypothesis of Theorem \ref{leila-theorem} are satisfied. In particular if  $m_x = Z_x(\widehat{f}_{r})$ for some $r \in \Lambda \setminus \left\{0\right\}$ then 
$\widehat{f}_{s}|X_0 \neq 0,\ \rho(r)\neq \pm I $ and
$$ \left\lbrace \begin{aligned}& Z_x(\widehat{f}_{s})-m_x=Z_x(\widehat{f}_{s})-Z_x(\widehat{f}_{r})=1\ \ \textnormal{if }\ \overline{\rho} \ \text{conjugates into } \ D, \\
 &   Z_x(\widehat{f}_{s})-m_x=Z_x(\widehat{f}_{s})-Z_x(\widehat{f}_{r})=2\  \  \text{otherwise}.\end{aligned}\right.
$$
Since $X(Y_K)$ is the union of its 1-dimensional components we have
$$||s||=  m_0+   \sum_{x\in \widehat{X}}\  \left(Z_x(\widehat{f}_{s})-m_x \right) = m_0+A(s)+2B(s).$$


Finally $\sharp A(s)=\sharp A(r)$ and $\sharp B(s)=\sharp B(r)$ from Lemma~\ref{correspondence}, since $Y_K(s)\cong Y_K(r)$. Therefore $||r||=||s||$.

\end{proof}

\bigskip

\textbf{Proof of Theorem \ref{my-theorem-I}.}  Let $s$ be an exceptional slope on $\partial Y_K$ and $r\in C(s)$.


We can assume that $b_1(Y_K(s))= 0$ since there is only one slope which produces a manifold with positive first Betti number. Let us suppose that $s$ is a boundary slope. From \cite[Theorem 2.0.3]{Cyclic-paper} we have the following possibilities:
\begin{itemize}
\item[(i)] $Y_K(s)$ contains a closed essential surface of strictly positive genus.
\item[(ii)] $Y_K(s)$ is the connected sum of two lens spaces.
\item[(iii)] There is a closed essential surface $S \subset Y_K$ which compresses in $Y_K(s)$ but which remains incompressible in $Y_K(\delta)$ as long as $\Delta(s,\delta)> 1$.
\item[(iv)] $Y_K(s) \cong S^1 \times S^2$.
\end{itemize}

Since $Y_K(s)$ is small-Seifert with $b_1(Y_K(s))= 0$, only (iii)  can occur. Then the fact that $Y_K(s) \cong Y_K(r)$ implies that $S$ also compresses in $Y_K(r)$ so  $\Delta(s,r) \leq 1$. 
The condition  $\Delta(s,r) \leq 1$ implies that there are at most three of such slopes. Assume $r \in C(s)$ is distinct from $s$, then $\Delta(s,r)=1$ and we have either
 $$ C(s)= \left\lbrace  r, s + r \right\rbrace,\ \ \ \textnormal{or} \ \ \  C(s)= \left\lbrace  r \right\rbrace.$$
Now by homological reason there is a constant $c_K$ independent of $s$ such that
$$|H_1(Y_K(s);\Z)|=c_K \Delta(s,\lambda)$$
and similarly for $r$, see \cite{Watson-PhD} for details.
Therefore since $H_1(Y_K(s);\Z)=H_1(Y_K(r);\Z)=H_1(Y_K(s + r);\Z)$, we must have 
$$\Delta(s,\lambda)=\Delta(r,\lambda)=\Delta(s + r,\lambda).$$
Thus all three elements $s,r,s +r$  lie on  the same line $l$ in $\R^2$ which is at fixed distance from the rational longitude $\lambda$. This is impossible since  $s$ and $r$ are linearly independent. It follows that the first case cannot occur and we have:
$$ C(s)= \left\lbrace r \right\rbrace.$$
Now if $s$ is not a boundary slope  we can apply Lemma \ref{mitovy-norm} to obtain
$$||s||= ||r||.$$ 
Let $k=||s||$, then every element of $C(s)$ lies on the boundary of the ball $B(0,k)$ of the absolute semi-norm. On the other hand if $r\in C(s)$ then $s$ and $r$ must have the same $\mu$ component since they give homeomorphic manifolds. Hence they also lie on a line $l$ parallel to $\lambda$.  From \cite[Proposition 5.2, Proposition 5.3]{Boyer-Zhang} $B(0,k)$ is either a convex polygon or an infinite band. In each cases the line $l$ intersect  $\partial B(0,r)$ twice (at $s$ and $r$) unless $k$ is  a multiple of $s \cdot \lambda$.  This prove  $\sharp C(s)\leq 1$.
\qed 

\bibliographystyle{plain}
\bibliography{PSL2C-character-varieties-and-cosmetic-surgeries}
\thanks{University of Georgia, Department of Mathematics, 606 Boyd GSRC, Athens GA, USA.\\  Email: huygens@cirget.ca}
\end{document}